\newtheoremstyle{mystyle}
{}
{}
{\normalfont}
{0pt}
{\bfseries}
{.}
{4pt}
{}
\theoremstyle{mystyle}
\newtheorem{definition}{Definition}[section]
\newtheoremstyle{mystyle2}
{6pt}
{6pt}
{\itshape}
{0pt}
{\bfseries}
{.}
{4pt}
{}
\theoremstyle{mystyle2}
\newtheorem{lemma}[definition]{Lemma}
\begin{document}
\title[Unitary Howe dualities]{Unitary Howe dualities for fermionic and bosonic algebras and related Dirac operators}
\author{Guner Muarem}
\address{Campus Middelheim 
	\\Middelheimlaan 1
	\\M.G.221 
	\\2020 Antwerpen 
	\\	Belgi\"e}
\curraddr{}
\email{guner.muarem@uantwerpen.be}
\thanks{}
\maketitle
\begin{abstract}
	In this paper we use the canonical complex structure $\mathbb{J}$ on $\mathbb{R}^{2n}$ to introduce a twist of the symplectic Dirac operator. As a matter of fact, these operators can be interpreted as the bosonic analogues of the Dirac operators on a Hermitian manifold. Moreover, we prove that the algebra of these symplectic Dirac operators is isomorphic to the Lie algebra $\mathfrak{su}(1,2)$ which leads to the Howe dual pair $(\mathsf{U}(n),\mathfrak{su}(1,2))$.
\end{abstract}
\section{Introduction}\label{sec:intro}
\noindent The CCR (canonical commuting relation) and CAR algebras (canonical anticommuting relation) are fundamental algebras in theoretical physics used for the study of bosons and fermions. From a mathematical viewpoint, these algebras are named the Weyl algebra (or symplectic Clifford algebra) and Clifford algebra. These algebras can be constructed in a very analogous way. The Clifford algebra is constructed on a vector space $V$ equipped with a symmetric bilinear form $B$, whereas the Weyl algebra requires an even dimensional vector space equipped with a skew-symmetric bilinear form (or symplectic form) $\omega$. In both cases, one then constructs the tensor algebra $T(V)$ where an ideal $I(V)$ is divided out. In the orthogonal setting, this is the ideal $I_B(V)$ with elements subject to the relation $\{u,v\}=2B(u,v)$. In the symplectic setting, this is the ideal $I_\omega(V)$ generated by $[u,v]=-\omega(u,v)$. \par There is, however, a fundamental difference: the Clifford algebra is finite-dimensional, whereas the Weyl algebra is infinite-dimensional. For the spinors (orthogonal versus symplectic) the same infinite-dimensional principle holds as for the Clifford algebras.
As a matter of fact, the symplectic spinors are the smooth vectors in the metaplectic representation \cite{Habermann}. Using the generators of the Clifford (resp.\ Weyl) algebra, one can associate a natural first order spin (resp.\ metaplectic) invariant differential operator by contracting the Clifford algebra elements using the bilinear form $B$ (resp. the symplectic form $\omega)$ with derivatives. This gives rise to the Dirac operator $\partial_x=\sum_{k=1}^n e_k\partial_{x_k}$ where $\{e_j,e_k\}=-2\delta_{ij}$ and the symplectic Dirac operator $\sum_{k=1}^n \left(iq_k\partial_{y_k}-\partial_{q_k}\partial_{x_k}\right)$ where $[\partial_{q_j},iq_k]=i\delta_{jk}$ are the Heisenberg relations. \par The theory which studies the solutions of the Dirac operator is known as Clifford analysis and can be seen as a hypercomplex function theory. Moreover, quite some generalisations have occurred in the last two decades. This involves e.g.\ Clifford analysis on superspace and Clifford analysis on (hyper)K\"ahler spaces (see for instance \cite{osp}). It is in the latter framework in which this paper is situated, but then from a symplectic point of view. More precisely, we provide the foundations of what we will call a \textit{hermitian variant of symplectic Clifford analysis}, where we incorporate the additional datum of a compatible complex structure $\mathbb{J}$ on the flat symplectic space $\mathbb{R}^{2n}$. This leads to the study of the symplectic Dirac operator on a K\"ahler manifold as was already initiated in \cite{Habermann,Dol}. However, the underlying invariance symmetry and the algebra generated by these type of operators (and their duals) was never investigated. 
\section{Rudiments of symplectic Clifford analysis}
\noindent Let us consider the canonical symplectic space $\mathbb{R}^{2n}$ with coordinates $({x},{y})$ and the usual symplectic form $\omega_0=\sum_{j=1}^n dx_j\wedge dy_j$ which has the matrix representation $$\Omega_0=\begin{pmatrix}
0	& I_n\\
-I_n&	0
\end{pmatrix}.$$
Recall that the symplectic group $\mathsf{Sp}(2n,\mathbb{R})$ is the group given by invertible linear transformations preserving the non-degenerate skew-symmetric bilinear form from above and is given (in terms of matrices) by 
$$
\mathsf{Sp}(2n,\mathbb{R})=\{M\in \mathsf{GL}(2n,\mathbb{R})\mid M^T\Omega_0M=\Omega_0\}.$$
The group is non-compact and has dimension $2n^2+n$.  Moreover, the corresponding Lie algebra is denoted by $\mathfrak{sp}(2n,\mathbb{R})$. 
The main difference with the orthogonal case, lies in the fact that the metaplectic group (the double cover of the symplectic group) does \textit{not} admit a finite dimensional representation (it is not a matrix group). This is a strong contrast with the spin representation in the orthogonal case.  Moreover, the orthogonal spinors $\mathbb{S}$ are realised as a idempotent left ideal in the Clifford algebra, which is not the case for the symplectic spinors. As mentioned, the symplectic equivalent of the spin representations are infinite dimensional, which means that one needs to work with the theory of unitary representations.
\subsection{The Schwartz space and metaplectic representation}
For further convenience, we fix notation and define the Schwartz space, which plays a crucial role in the construction of the metaplectic representation. On the space $\mathcal{C}^{\infty}(\mathbb{R}^n,\mathbb{C})$ we define (using the multi-index notation) the norm
$$	||f||_{\alpha,\beta}:=\sup_{{q}\in\mathbb{R}^n}|{q}^\alpha(D^{\beta}f)({q})|
$$ for all $\alpha,\beta\in\mathbb{N}^n$. The Schwartz space $\mathcal{S}(\mathbb{R}^n)$ is the subspace of $L^p(\mathbb{R}^n)$ (for $1\leq p\leq \infty$) consisting of rapidly decreasing functions and is given by $$
\mathcal{S}(\mathbb{R}^n,\mathbb{C}):=\{f\in\mathcal{C}^{\infty}(\mathbb{R}^n,\mathbb{C}):||f||_{\alpha,\beta}<\infty \text{ for all } \alpha,\beta\in\mathbb{N}^n\}.
$$
We now describe (following \cite{Habermann}) the infinite-dimensional 
Segal-Shale-Weil representation (also oscillator or metaplectic 
representation) of the metaplectic group. The smooth vectors of 
the unitary representation $\mathfrak{m} : \mathsf{Mp}(2n) \to \mathsf{U}
(L^2(\mathbb{R}^n))$ coincide with the Schwartz space $\mathcal{S}
(\mathbb{R}^n)$ and are a model for the symplectic spinors $\mathbb{S}
^{\infty}$. Due to Stone-Von Neumann theorem the 
representation is unique (up to unitary equivalence).
\subsection{The symplectic Clifford algebra and the related Dirac operator}
	Let $(V,\omega)$ be a symplectic vector space.
The \textit{symplectic Clifford algebra} $\mathsf{Cl}_s(V,\omega)$ is defined as the quotient algebra of the tensor algebra $T(V)$ of $V$, by the two-sided ideal $$\mathcal{I}_{\omega}:=\{v\otimes u-u\otimes v+\omega(v,u) : u,v\in V\}.$$ In other words
$\mathsf{Cl}_s(V,\omega):=T(V)/\mathcal{I}_{\omega}$
is the algebra generated by $V$ in terms of the relation $[v,u]=-\omega(v,u)$, where we have omitted the tensor product symbols.
We refer to the symplectic Clifford algebra on $\mathbb{R}^{2n}$ as the $n$th Weyl algebra $\mathcal{W}_n$ with generators $iq_1,\dots,iq_n,\partial_{q_1},\dots,\partial_{q_n}$ satisfying the commutation relations $[q_j,q_k]=0$ and $[\partial_{q_j},q_k]=\delta_{jk}$.
\par Denote by $\mathcal{F}$ a suitable function space (e.g. the space of polynomials, or smooth funtions). 
\begin{definition}
The {\em symplectic Dirac operator} on $(\mathbb{R}^{2n},\omega_0)$ is the first-order (in the base variables ${x}$ and ${y}$) differential operator acting on a symplectic spinor-valued functions space $\mathcal{F}\otimes \mathbb{S}^{\infty}$ given by 
$
D_s=\sum_{j=1}^n (iq_j\partial_{y_j}-\partial_{q_j}\partial_{x_j}).
$
With respect to the symplectic Fischer inner product (see \cite{sym}), we obtain the dual operator
$
X_s=\sum_{j=1}^n (iq_j{x_j}+\partial_{q_j}{y_j}).
$
\end{definition}
\noindent These operators satisfy the relations:
\begin{align*}
[\mathbb{E}+n,X_s]=X_s,&&
[\mathbb{E}+n,D_s]=-D_s,&&
[D_s,X_s]=-i(\mathbb{E}+n)
\end{align*}
where $\mathbb{E}=\sum_{j=1}^n (x_j\partial_{x_j}+y_j\partial_{y_j})$ is the \textit{Euler operator}.
In other words, the three operators give rise to a copy of the Lie algebra $\mathfrak{sl}(2)$.
\section{The interaction with a complex structure}
\subsection{Definition of the twisted symplectic Dirac operators}
We will now introduce a complex structure $\mathbb{J}$ on the symplectic manifold $(\mathbb{R}^{2n},\omega_0)$ which is compatible with the symplectic form $\omega_0$. This means that $\omega_0(x,\mathbb{J}y)$ defines a Riemannian metric $g$. Otherwise said,  we will be working with the canonical K\"ahler manifold $(\mathbb{R}^{2n},\omega_0,g,\mathbb{J})$. By Darboux's theorem, we obtain, with respect to the canonical symplectic basis $\{e_j\}_{j=1}^{2n}$ the following complex structure $\mathbb{J} = \left(\begin{smallmatrix}
0	&	-I_n \\
I_n	&	0
\end{smallmatrix}\right).$
The action of the complex structure $\mathbb{J}$ on $\mathbb{R}^{2n}$ is given by
\begin{align*}
(x_1,\dots,x_n,y_1,\dots,y_n)\mapsto (y_1,\dots,y_n,-x_1,\dots,-x_n).
\end{align*}
\begin{definition}
	The new differential operators acting on symplectic spinor-valued functions \begin{align*}
\tilde{D}_s &=\sum_{j=1}^niq_j\partial_{x_j}+\partial_{y_j}\partial_{q_j}
&\tilde{X}_s &=\sum_{j=1}^n x_j\partial_{q_j} - iy_jq_j
&\mathbb{E}&=\sum_{j=1}^nx_j\partial_{x_j}+y_j\partial_{y_j}
\end{align*} 
also give rise to a copy of the Lie algebra $\mathfrak{sl}(2)$. We call these first two operators the \textit{twists} of $D_s$ and $X_s$.
\end{definition}
 Both sets of operators, i.e.\ $(D_s,X_s)$ and $(\tilde{D}_s,\tilde{X}_s)$, are symplectic invariant, albeit under the following two \textit{different} realisations of the symplectic Lie algebra given by:
	\begin{align*}
	\begin{cases}
	X_{jk}=x_j\partial_{x_k}-y_k\partial_{y_j} - (q_k\partial_{q_j}+\frac{1}{2}\delta_{jk})
	& 1\leq j\leq k\leq n
	\\Y_{jk}=x_j\partial_{y_k}+x_k\partial_{y_j} +i\partial_{q_j}\partial_{q_k}
	& j<k=1,\dots,n
	\\ Z_{jk}=y_j\partial_{x_k}+y_k\partial_{x_j} + i q_jq_k
	& j<k=1,\dots,n
	\\Y_{jj}=x_j \partial_{y_j} +\frac{i}{2}\partial_{q_j}^2
	& j=1,\dots,n
	\\Z_{jj}=y_j\partial_{x_j}+\frac{i}{2} q_j^2
	&  j=1,\dots,n
	\end{cases}
	\end{align*} 
and 
\begin{align*}
\begin{cases}
\tilde{X}_{jk}=x_j\partial_{x_k}-y_k\partial_{y_j} + q_k\partial_{q_j}+\frac{1}{2}\delta_{jk}
& 1\leq j\leq k\leq n
\\\tilde{Y}_{jk}=x_j\partial_{y_k}+x_k\partial_{y_j} -iq_jq_k
& j<k=1,\dots,n
\\ \tilde{Z}_{jk}=y_j\partial_{x_k}+y_k\partial_{x_j} -i \partial_{q_j}\partial_{q_k}
& j<k=1,\dots,n
\\\tilde{Y}_{jj}=x_j \partial_{y_j} -\frac{i}{2}q_j^2
& j=1,\dots,n
\\\tilde{Z}_{jj}=y_j\partial_{x_j}-\frac{i}{2} \partial_{q_j}^2& j=1,\dots,n
\end{cases}
\end{align*}
Of course, it is not very useful that $D_s$ and $\widetilde{D}_s$ are invariant under different (yet isomorphic) $\mathfrak{sp}(2n,\mathbb{R})$-realisations. Therefore, we will perform a symmetry reduction so that both operators become invariant under \textit{one and the same} Lie algebra. To that end, we need to find the symplectic matrices which commute with the complex structure. 
\begin{lemma} We have that
	 $$
	\mathsf{Sp}_{\mathbb{J}}(2n,\mathbb{R}):= \{M\in\mathsf{Sp}(2n,\mathbb{R})\mid M\mathbb{J} = \mathbb{J}M\}
	$$ 
	defines a realisation for the unitary Lie group.
\end{lemma}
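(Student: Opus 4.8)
The plan is to show that $\mathsf{Sp}_{\mathbb{J}}(2n,\mathbb{R})$ is isomorphic to $\mathsf{U}(n)$ by exhibiting an explicit block decomposition of its elements. First I would write a generic symplectic matrix in $n\times n$ block form $M=\left(\begin{smallmatrix} A & B \\ C & D\end{smallmatrix}\right)$ and impose the commutation relation $M\mathbb{J}=\mathbb{J}M$ with $\mathbb{J}=\left(\begin{smallmatrix} 0 & -I_n \\ I_n & 0\end{smallmatrix}\right)$. A direct computation of both products shows that this forces $D=A$ and $C=-B$, so that $M=\left(\begin{smallmatrix} A & B \\ -B & A\end{smallmatrix}\right)$. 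This is exactly the real form of a complex $n\times n$ matrix under the standard identification $\mathbb{R}^{2n}\cong\mathbb{C}^n$ induced by $\mathbb{J}$, namely $A+iB$.

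Next I would feed this block form into the defining condition $M^T\Omega_0 M=\Omega_0$ of the symplectic group. Writing out the matrix products with $\Omega_0=\left(\begin{smallmatrix} 0 & I_n \\ -I_n & 0\end{smallmatrix}\right)$ and using $D=A$, $C=-B$, the symplectic relations collapse to the two conditions $A^TA+B^TB=I_n$ and $A^TB-B^TA=0$ (the remaining blocks give no new information by skew-symmetry). These are precisely the real and imaginary parts of the single complex equation $(A+iB)^*(A+iB)=I_n$, i.e.\ the statement that the complex matrix $A+iB$ is unitary. Hence the assignment $M=\left(\begin{smallmatrix} A & B \\ -B & A\end{smallmatrix}\right)\mapsto A+iB$ is a well-defined bijection onto $\mathsf{U}(n)$.

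Finally I would check that this correspondence is a group homomorphism: multiplying two matrices of the form $\left(\begin{smallmatrix} A & B \\ -B & A\end{smallmatrix}\right)$ produces another matrix of the same shape whose complex counterpart is the product $(A+iB)(A'+iB')$, which is the routine verification that real block multiplication mirrors complex multiplication. Combined with the bijection above, this yields the claimed group isomorphism $\mathsf{Sp}_{\mathbb{J}}(2n,\mathbb{R})\cong\mathsf{U}(n)$, and passing to Lie algebras gives the realisation of $\mathfrak{u}(n)$. The main obstacle, such as it is, is purely bookkeeping: carefully tracking the four blocks through both the commutation constraint and the symplectic constraint simultaneously, and recognising that the resulting real equations are exactly the unitarity condition on $A+iB$ rather than something weaker. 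Once the identification $\mathbb{R}^{2n}\cong\mathbb{C}^n$ via $\mathbb{J}$ is fixed at the outset, every step is a short computation, and no deeper structural input is needed.
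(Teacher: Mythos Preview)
Your proposal is correct and follows essentially the same route as the paper: block decomposition, the commutation constraint forcing $D=A$ and $C=-B$, then the symplectic constraint reducing to $A^TA+B^TB=I_n$ and $A^TB=B^TA$, and finally the identification $M\mapsto A+iB$ with $\mathsf{U}(n)$. The only difference is cosmetic: the paper verifies unitarity by checking $M^TM=I_{2n}$ in block form, whereas you phrase it as the real and imaginary parts of $(A+iB)^*(A+iB)=I_n$, and you add the explicit check that the map respects multiplication, which the paper simply asserts.
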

\begin{proof}
 In order to see this, 
assume that $M$ is of the block-form:
$\left(\begin{smallmatrix}
A	&	B\\
C	&	D
\end{smallmatrix}\right),$
where $A,B,C$ and $D$ are $(n\times n)-$matrices. The condition that $M$ is symplectic is equivalent to the one of the following conditions: the matrices $A^TC$ and $B^TD$ are symmetric and $A^TD-C^TB=I$.
So, in order to determine $\operatorname{Sp}_{\mathbb{J}}(2n,\mathbb{R})$ we need to determine the symplectic matrices $M$ which commute with the complex structure $\mathbb{J}$. The latter conditions means that \begin{align*}
M\mathbb{J}=\mathbb{J}M&\iff \mathbb{J}^{-1}M\mathbb{J}=M
\\&\iff
\begin{pmatrix}
0	&	I\\
-I	&	0
\end{pmatrix}\begin{pmatrix}
A	&	B\\
C	&	D
\end{pmatrix}
\begin{pmatrix}
0	&	-I\\
I	&	0
\end{pmatrix} = \begin{pmatrix}
A	&	B\\
C	&	D
\end{pmatrix}	
\\ &\iff\begin{pmatrix}
D	&	-C\\
-B	&	A
\end{pmatrix} = \begin{pmatrix}
A	&	B\\
C	&	D
\end{pmatrix}
\end{align*}
This implies $A=D$ and $B=-C$. In other words the matrix $M$ is of the form:
$M=\left(\begin{smallmatrix}
A	&	B\\ -B	& A
\end{smallmatrix}\right).$
Next, we still have the condition that $M$ is symplectic, i.e. \begin{align*}
&M^T\Omega M=\Omega\\&\iff
\begin{pmatrix}
A^T	&	C^T\\
B^T	&	D^T
\end{pmatrix} \begin{pmatrix}
0	&	I \\
-I	&	0
\end{pmatrix}
\begin{pmatrix}
A	&	B\\
C	&	D
\end{pmatrix} = \begin{pmatrix}
0	&	I \\
-I	&	0
\end{pmatrix}
\\ &\iff \begin{pmatrix}
-C^T& A^T\\
-D^T&B^T
\end{pmatrix}\begin{pmatrix}
A	&	B\\
C	&	D
\end{pmatrix} =\begin{pmatrix}
-C^TA+A^TC&-C^TB+A^TD\\ -D^TA+B^TC&-D^TB+B^TD
\end{pmatrix}=\begin{pmatrix}
0&I\\-I&0
\end{pmatrix}
\\&\iff \begin{cases}
A^TC=C^TA\\
A^TD-C^TB=I\\
B^TC-D^TA=-I\\
B^TD=D^TB
\end{cases}
\end{align*}
This means that $A^TC$ and $B^TD$ should be symmetric matrices and $A^TD-C^TB=I$. But now, due to the first condition this reduces to  $ B^TA=A^TB$ and $A^TA+B^TB=I$. In other words, the matrices we are looking for must be of the form $M=\left(\begin{smallmatrix}
A	&	B\\
-B	&	A
\end{smallmatrix}\right)$
with $B^TA=A^TB$ and $A^TA+B^TB=I$, i.e.
\begin{align*}
\begin{pmatrix}
A^T	&	-B^T\\ B^T	& A^T
\end{pmatrix}\begin{pmatrix}
A	&	B\\ -B	& A
\end{pmatrix}=\begin{pmatrix}
A^TA+B^TB&A^TB-B^TA\\ B^TA-A^TB&B^TB+A^TA
\end{pmatrix}=\begin{pmatrix}
I&0\\0&I
\end{pmatrix}
\end{align*}
Which is exactly the condition for a unitary matrix. The map $\Phi: \mathsf{Sp}_{\mathbb{J}}(2n,\mathbb{R})\to \mathsf{U}(n):M\mapsto A+iB$ gives the wanted isomorphism.
\end{proof}
\subsection{Unitary invariant symplectic Dirac operators}
One can now check that the symplectic Dirac operator and its twist, are unitary invariant differential operators.  This can be done by verifying that the operators commute with the following realisation of unitary Lie algebra $\mathfrak{u}(n)$:
\begin{lemma}
	We have the following operator realisation of the Lie algebra $\mathfrak{u}(n)$:
\begin{align*}
\begin{cases}
A_{jk}=y_j\partial_{x_k}+y_k\partial_{x_j} -x_j\partial_{y_k}-x_k\partial_{y_j}+ i (q_jq_k -\partial_{q_j}\partial_{q_k})&\quad 1\leq j<k\leq n
\\B_{jj}=y_j\partial_{x_j}-x_j\partial_{y_j}+\frac{i}{2}\left(q_j^2-\partial_{q_j}^2\right)
&\quad 1\leq j\leq n
\\ C_{jk}=x_j\partial_{x_k} - x_k\partial_{x_j}+ y_j\partial_{y_k} - y_k\partial_{y_j} + q_j\partial_{q_k} - q_k\partial_{q_j}
&\quad 1\leq j<k\leq n
\end{cases}
\end{align*}
\end{lemma}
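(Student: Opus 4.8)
The plan is to deduce this realisation from the previous lemma: differentiating the condition $M\mathbb{J}=\mathbb{J}M$ exhibits $\mathfrak{u}(n)$ as an explicit Lie subalgebra of $\mathfrak{sp}(2n,\mathbb{R})$, and restricting one of the two $\mathfrak{sp}(2n,\mathbb{R})$-realisations displayed above to that subalgebra produces precisely the operators $A_{jk}$, $B_{jj}$, $C_{jk}$. First I would linearise: writing $M=I+tm+O(t^{2})$, the relation $M\mathbb{J}=\mathbb{J}M$ becomes $m\mathbb{J}=\mathbb{J}m$, which by the very block computation in the proof of the previous lemma forces $m=\bigl(\begin{smallmatrix}a&b\\-b&a\end{smallmatrix}\bigr)$, while linearising $A^{T}A+B^{T}B=I$ and $B^{T}A=A^{T}B$ gives $a^{T}=-a$ and $b^{T}=b$. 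Thus
$$\mathfrak{sp}_{\mathbb{J}}(2n,\mathbb{R})=\left\{\begin{pmatrix}a&b\\-b&a\end{pmatrix}\ :\ a^{T}=-a,\ b^{T}=b\right\},$$
and $d\Phi\colon m\mapsto a+ib$ identifies this with the skew-Hermitian matrices, i.e.\ with $\mathfrak{u}(n)$; its dimension $\binom{n}{2}+\bigl(\binom{n}{2}+n\bigr)=n^{2}$ agrees with $\dim\mathfrak{u}(n)$, as it must.

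Next I would fix the basis of $\mathfrak{sp}_{\mathbb{J}}(2n,\mathbb{R})$ given by the skew matrices $a=E_{jk}-E_{kj}$, $b=0$ (for $j<k$), together with the symmetric matrices $b=E_{jk}+E_{kj}$ (for $j<k$) and $b=E_{jj}$, with $a=0$; under $d\Phi$ these map to $E_{jk}-E_{kj}$, $i(E_{jk}+E_{kj})$, $iE_{jj}$, a basis of $\mathfrak{u}(n)$. As recalled above, the operators $X_{jk},Y_{jk},Z_{jk}$ constitute a realisation, i.e.\ a Lie algebra homomorphism $\mathfrak{sp}(2n,\mathbb{R})\to\mathrm{End}(\mathcal{F}\otimes\mathbb{S}^{\infty})$, so its restriction to the subalgebra $\mathfrak{sp}_{\mathbb{J}}(2n,\mathbb{R})$ is automatically a realisation of $\mathfrak{u}(n)$. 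It then remains to evaluate this restriction on the chosen basis; a direct inspection of the table gives $X_{jk}-X_{kj}=C_{jk}$, $Z_{jk}-Y_{jk}=A_{jk}$ and $Z_{jj}-Y_{jj}=B_{jj}$ (with $X_{kj}$ read off from the same formula with interchanged indices), each being the image of a basis element of $\mathfrak{sp}_{\mathbb{J}}(2n,\mathbb{R})$ up to an inessential overall sign. Since the $n^{2}$ operators $A_{jk},B_{jj},C_{jk}$ are manifestly linearly independent, the restricted homomorphism is an isomorphism onto their linear span, which is therefore a copy of $\mathfrak{u}(n)$; one checks in addition that the symmetric generators $A_{jk}$ and $B_{jj}$ come out identically from the tilded realisation, which is the purpose of the symmetry reduction.

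As an independent check one may instead verify the lemma by brute force: compute the brackets $[A_{jk},A_{lm}]$, $[A_{jk},B_{ll}]$, $[A_{jk},C_{lm}]$, $[B_{jj},C_{lm}]$, $[C_{jk},C_{lm}]$, $[B_{jj},B_{ll}]$ and match them with the structure constants of $\mathfrak{u}(n)$ under the correspondence $C_{jk}\leftrightarrow E_{jk}-E_{kj}$, $A_{jk}\leftrightarrow i(E_{jk}+E_{kj})$, $B_{jj}\leftrightarrow iE_{jj}$. I expect the only real obstacle here to be bookkeeping rather than anything conceptual: on the base variables $x,y$ the operators are vector fields and their brackets close at once, but the symplectic-spinor tails involve the Weyl-algebra relations $[\partial_{q_j},q_k]=\delta_{jk}$, so the commutators of the quadratic tails (terms of type $[q_j^{2},\partial_{q_j}^{2}]$) threaten central contributions of the same flavour as the $-i(\mathbb{E}+n)$ occurring in $[D_s,X_s]$; the substance of the computation is that, after passing to the $\mathbb{J}$-commuting matrices, these cancel, so that one recovers $\mathfrak{u}(n)$ on the nose and not a central extension. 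In either approach no input beyond the previous lemma is needed; the work is the translation between the matrix and operator pictures and the careful handling of the Weyl-algebra tails.
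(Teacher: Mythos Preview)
The paper states this lemma without proof; it is evidently meant as a direct computation left to the reader. Your proposal is correct and supplies a more conceptual argument than brute force: linearising the condition $M\mathbb{J}=\mathbb{J}M$ from the preceding lemma gives the matrix description of $\mathfrak{sp}_{\mathbb{J}}(2n,\mathbb{R})\cong\mathfrak{u}(n)$, and restricting the displayed $\mathfrak{sp}(2n,\mathbb{R})$-realisation to this subalgebra yields exactly the stated operators. The identities $X_{jk}-X_{kj}=C_{jk}$, $Z_{jk}-Y_{jk}=A_{jk}$ and $Z_{jj}-Y_{jj}=B_{jj}$ are easily checked, and since a Lie algebra homomorphism restricted to a subalgebra is again a homomorphism, the closure under brackets comes for free rather than from a commutator-by-commutator verification.

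One small caveat on your closing remark: while $A_{jk}$ and $B_{jj}$ indeed arise identically from the tilded realisation (since $\tilde{Z}_{jk}-\tilde{Y}_{jk}=A_{jk}$ and $\tilde{Z}_{jj}-\tilde{Y}_{jj}=B_{jj}$), the combination $\tilde{X}_{jk}-\tilde{X}_{kj}$ does \emph{not} equal $C_{jk}$: the $q$-tail comes out with the opposite sign. So the fact that $D_s$ and $\widetilde{D}_s$ are both invariant under the \emph{same} copy of $\mathfrak{u}(n)$ given in the lemma is not entirely automatic from your restriction argument and still requires a short direct check for $C_{jk}$ (or a different identification of the skew part in the tilded picture). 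This does not affect the validity of your proof of the lemma itself, which only asserts that the operators realise $\mathfrak{u}(n)$.
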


\noindent This means that we can refine the $\mathfrak{sp}(2n)$-invariant PDE $D_sf=0$ into two $\mathfrak{u}(n)$-invariant PDEs given by $D_sf = 0$ and $\widetilde{D}_sf=0$, for a symplectic spinor valued polynomial $f \in \mathcal{P}(\mathbb{R}^{2n},\mathbb{C})\otimes \mathcal{S}(\mathbb{R}^n)$. In analogy with the orthogonal case, we call the solutions {\em hermitian symplectic monogenics} (or $h$-symplectic monogenics in short).
\subsection{Symplectic Dolbeault operators}
Moreover, there is a second way of introducing the twist of the symplectic Dirac operator. Let us define the following operators which are known in the literature as the \textit{symplectic Dolbeault operators} \cite{Dol} defined by means of $$D_z=\frac{D_s+i\widetilde{D}_s}{2}$$ and $$ D_z^{\dagger}:=\frac{D_s-i\widetilde{D}_s}{2}.$$
One easily verifies that
\begin{align*}
\frac{1}{2}(D_s+i\widetilde{D}_s)&=-\sum_{j=1}^n\mathfrak{F}_j\partial_{z_j}\quad\\\ \frac{1}{2}(D_s-i\widetilde{D}_s)&=\sum_{j=1}^n\mathfrak{F}_j^{\dagger}\partial_{\overline{z}_j}.
\end{align*}
where we have introduced the symbols $\mathfrak{F}_j=(q_j+\partial_{q_j})$ and $\mathfrak{F}_j^{\dagger}=(q_j-\partial_{q_j})$.
The structure of the operators $D_z$ and $D_z^{\dagger}$ is similar to the orthogonal case. However, the raising/lowering operators $\mathfrak{F}_j$ and $\mathfrak{F}_j^{\dagger}$ are used instead of isotropic Witt vectors $\mathfrak{f}_j$ and $\mathfrak{f}_j^{\dagger}$ (see for instance \cite{spin} and the references therein).
\subsection{Class of simultaneous solutions of $D_s$ and $\widetilde{D}_s$}
We will now describe a wide class of examples of $h$-symplectic monogenics, by making the link with holomorphic functions in several variables.
Let $f:\Omega\subset \mathbb{C}^n\to \mathbb{C}$ be a complex-valued function in several complex variables which is of the class $\mathcal{C}^1(\Omega)$ (i.e.\ continuously differentiable). We say that $f$ is holomorphic (in several variables) if $\partial_{\overline{z}_j}f(z_1,\dots,z_n)=0$ for all $1\leq j \leq n$,
where $\partial_{\overline{z}_j}:=\frac{1}{2}(\partial_{x_j}+\partial_{y_j})$ is the Cauchy-Riemann operator in the relevant variable. Moreover, we denote the set of holomorphic functions in $\Omega$ by $\operatorname{Hol}(\Omega)$.
\par In order not to overload notations, we use the summation convention.
Suppose that we have a function of the form $F({x},{y},{q})=e^{-\frac{1}{2}|{q}|^2}H({x},{y})$. Letting the symplectic Dirac operator act on $F$ gives:
\begin{align*}
D_s\left(e^{-\frac{1}{2}|{q}|^2}H({x},{y})\right)
&=(iq_k\partial_{y_k}-\partial_{x_k}\partial_{q_k})\left(e^{-\frac{1}{2}|{q}|^2}H({x},{y})\right)
\\&=iq_ke^{-\frac{1}{2}|{q}|^2}\partial_{y_k}H({x},{y})+e^{-\frac{1}{2}|{q}|^2}q_k\partial_{x_k}H({x},{y})
\\&=e^{-\frac{1}{2}|{q}|^2}q_k(\partial_{x_k}+i\partial_{y_k})H({x},{y})
\end{align*}
We note that this equals zero if $(\partial_{x_k}+i\partial_{y_k})H({x},{y})=0$ for all $k=1,\dots,n$, i.e. if $H({x},{y})$ is a holomorphic function in several variables. Completely similar, 
\begin{align*}
\widetilde{D_s}\left(e^{-\frac{1}{2}|{q}|^2}H({x},{y})\right)
&=(iq_k\partial_{x_k}+\partial_{y_k}\partial_{q_k})\left(e^{-\frac{1}{2}|{q}|^2}H({x},{y})\right)
\\&=e^{-\frac{1}{2}|{q}|^2}q_k(i\partial_{x_k}-\partial_{y_k})H({x},{y})
\\&=ie^{-\frac{1}{2}|{q}|^2}q_k(\partial_{x_k}+i\partial_{y_k})H({x},{y}),
\end{align*}
which is zero for holomorphic $H({x},{y})$. \par This means that every function of the form $	e^{-\frac{1}{2}(q_1^2+\dots+q_n^2)}H({x},{y})$ with $H$ an holomorphic function is several variables is a solution of both $D_s$ and $\widetilde{D}_s$.
This observation generalises the class of solutions obtained by Habermann in \cite{Habermann} for $n=1$. As a matter of fact, it turned out that there are much \textit{more solutions} than the ones of this form. In order to describe these systematically, we will need the notion of Howe dualities and corresponding Fischer decompositions. This will be done in full detail in our upcoming paper \cite{sHow}. In the following section we will reveal the algebraic structures required for this approach.
\section{A unitary Howe duality associated with $D_s$ and $\widetilde{D}_s$}
\noindent Recall that the Lie algebra $\mathfrak{su}(1,2)$ is a quasi-split real form of the complex Lie algebra $\mathfrak{sl}(3)$ and is defined in terms of matrices as 
\begin{align*}
\mathfrak{su}(1,2) = \left\{\begin{pmatrix}
\alpha	&	\beta	&	ic \\
\gamma	&	\overline{\alpha} - \alpha &	-\overline{\beta}\\
id		&	-\overline{\gamma}	& - \overline{\alpha}.
\end{pmatrix}\mid c,d\in \mathbb{R}\ \& \ \alpha,\beta,\gamma\in\mathbb{C}\right\}
\end{align*}
Some calculations now lead to observation that the Lie algebra generated by the symplectic Dirac operators $D_s,\widetilde{D}_s$ and their duals $X_s,\widetilde{X_s}$ gives rise to a copy of the Lie algebra $\mathfrak{su}(1,2)$. In order to close the algebra, we introduce the following differential operators:
\begin{align*}
\mathcal{O}:=\sum_{j=1}^ni(x_j\partial_{y_j}
-y_j\partial_{x_j})+\partial_{q_j}^2-q_j^2,
&& \Delta:=\sum_{j=1}^n\partial_{x_j}^2+\partial_{y_j}^2&&\text{ and }
&& r^2:=\sum_{j=1}^n {x_j}^2+{y_j}^2.
\end{align*}
Together with symplectic Dirac operators $D_s,\widetilde{D}_s$ and their duals $X_s,\widetilde{X}_s$ these operators satisfy the commutator relations of $\mathfrak{su}(1,2)$. Moreover, their commutators are given by:
$$\begin{tabular}{c| c c c c c c c c }
$[\cdot,\cdot]$	& $D_s$ & $\widetilde{D}_s$  & $\Delta$  & $X_s$  & $\mathbb{E}$  & $\mathcal{O}$ & $\widetilde{X}_s$ & $r^2$  \\
\hline
$D_s$ & 0 & $\Delta$ & 0  & -$\mathbb{E}$  & $D_s$ & $-3\widetilde{D}_s$ & $-\mathcal{O}$  &  $-2\widetilde{X}_s$\\

$\widetilde{D}_s$ & $-\Delta$& 0 & 0 & $\mathcal{O}$ & $\widetilde{D}_s$ & $3D_s$ & $-\mathbb{E}$ & $2X_s$ \\

$\Delta$ & 0 & 0 & 0 & 	$\widetilde{D}_s$ & $\Delta$ & 0 & 	$-2{D}_s$  & $\mathbb{E}$ \\

$X_s$ & $\mathbb{E}$ & $-\mathcal{O}$ & $-\widetilde{D}_s$ & 0 & $X_s$ & $-3\widetilde{X}_s$ & $-r^2$ & 0 \\

$\mathbb{E}$ & $-D_s$ & $-\widetilde{D}_s$ & $-\Delta$ & $-X_s$ & 0 & 0 & $-\widetilde{X}_s$  & 0  \\

$\mathcal{O}$ & $3\widetilde{D}_s$ & $-3D_s$  & 0  & $3\widetilde{X}_s$ & 0 & 0 & $-3X_s$ & 0 \\

$\widetilde{X}_s$ & $\mathcal{O}$ & $\mathbb{E}$ & $2D_s$ & $r^2$ & $\widetilde{X}_s$ & $-3X_s$ & 0 & 0 \\

$r^2$ & $2\widetilde{X}_s$  & $-2X_s$ & $-\mathbb{E}$ & 0 & $-2r^2$ & 0 & 0 & 0 \\

\end{tabular}$$

\begin{enumerate}
\item We have two copies of the Heisenberg algebra: $$\operatorname{Alg}\{D_s,\tilde{D}_s,\Delta\} \cong \operatorname{Alg}\{X_s,\widetilde{X}_s,r^2\} \cong \mathfrak{h}_3.$$
\item We have two copies of the Lie algebra $\mathfrak{sl}(2)$
\[\operatorname{Alg}\{D,D^{\dagger},\mathbb{E}\} \cong \operatorname{Alg}\{\tilde{D},\tilde{D}^{\dagger},\mathbb{E}\} \cong \operatorname{Alg}\{\Delta,N,\mathbb{E}\}\cong \mathfrak{sl}(2).\]
\end{enumerate}
This means that there is a canonical $\mathfrak{su}(1,2)$-action on the space of spinor valued polynomials $\mathcal{P}(\mathbb{R}^{2n},\mathbb{C})\otimes\mathcal{S}(\mathbb{R}^n)$ where restricting to the subalgebra $\operatorname{Alg}(D_s,X_s)$ corresponds to $\mathfrak{sl}(2)$-copy obtained in the Howe duality for symplectic Clifford analysis (see \cite{sym} for more details). Now, taking into account the symplectic Dirac operators and its twists, we obtain the dual pair $\mathsf{U}(n)\times \mathfrak{su}(1,2)$ (i.e.\ the underlying group of invariance, together with the algebra generated by the operators and their duals). \par
We now focus on the reduction of the symplectic spinor space. 
In the orthogonal case, the spinor space $\mathbb{S}$ decomposes under the action of the unitary group $\mathsf{U}(n)$ as  $\mathbb{S}=\bigoplus_r\mathbb{S}_{(r)},$
with $\mathbb{S}_{(r)}$ inequivalent irreducible pieces, which are eigenspaces of the \textit{fermionic} quantum oscillator (also called spin-Euler operator, see for instance \cite{spin}). In the symplectic case, the relevant operator for decomposing the infinite dimensional spinor space is the \textit{bosonic} quantum oscillator. The hamiltonian of the quantum oscillator, the so-called Hermite operator, is given by \begin{align*}
\mathcal{H}:\mathcal{S}(\mathbb{R}^n)\to \mathcal{S}(\mathbb{R}^n), \quad f({q})\mapsto \frac{1}{2}\sum_{j=1}^n(\partial_{q_j}^2-q_j^2)f({q}).
\end{align*} 
Note that we can write $$\mathcal{O}=\sum_{j=1}^ni(x_j\partial_{y_j}
-y_j\partial_{x_j})+2\mathcal{H},$$ so that the Hermite operator is in fact the spinor-valued part of the operator $\mathcal{O}$, i.e.\ the differential operator in $\partial_{q_j}$ and the variables $q_j$. Moreover, the eigenspaces can be identified with the irreducible decomposition of $\mathbb{S}^{\infty}$ into $\mathfrak{u}(n)$-irreducible representations. This means that the symplectic spinor space $\mathbb{S}^{\infty}$ decomposes into $\mathfrak{u}(n)$-irreducible representations ${\widetilde{S}}^{\infty}_{(k)}$ of dimension ${n+k-1 \choose k}$ which can be thought of as $k$-homogeneous polynomials or the eigenspaces of the Hermite operator $\mathcal{H}$.
\par 
Moreover, the solutions of the corresponding Dirac operators, called monogenics, can be introduced from a purely representation theoretical viewpoint. In general, this boils down to determining the decomposition (this is called a \textit{Fischer decomposition})
$
\mathcal{P}_k(\mathbb{R}^m,\mathbb{C})\otimes \mathbf{S}
$
where $\mathbf{S}$ is the spinor space, which is $\mathbb{S}$ in the orthogonal case and $\mathbb{S}^{\infty}$ in the symplectic case, where we take $m=2n$ in particular. Moreover, the space of $k$-homogeneous polynomials $\mathcal{P}_k$ coincides with the $k$-symmetric power of the fundamental representation of resp.\ the orthogonal or symplectic algebra. We denote by $\mathcal{M}_k$ the $k$-homogeneous solutions of the Dirac operator $\partial_x$, these are called monogenics. They can be defined as follows:
\begin{align*}
\mathcal{M}_k
\leftrightarrow 	(k,0,\dots,0)\boxtimes \mathbb{S}= 	(k)\boxtimes \left(\frac{1}{2},\dots,\frac{1}{2}\right)\cong \left(k+\frac{1}{2},\dots,\frac{1}{2}\right), \end{align*}
where $\boxtimes$ denotes the Cartan product of the $\mathfrak{so}(m)$-representations.
In the symplectic case, we analoguously obtain:
\begin{align*}
\mathcal{M}_k^s
\leftrightarrow 	(k,0,\dots,0)_s\boxtimes \mathbb{S}^{\infty}&= 	(k)_s\boxtimes \left(\left(-\frac{1}{2},\dots,-\frac{1}{2}\right)\oplus \left(-\frac{1}{2},\dots,-\frac{3}{2}\right)\right)\\&\cong \left(k-\frac{1}{2},\dots,-\frac{1}{2}\right)\oplus \left(k-\frac{1}{2},\dots,-\frac{3}{2}\right). \end{align*} 
In order to obtain an algebraic characterisation of the space of $h$-symplectic monogenics, one proceeds as follows. First of all, we note that we need to consider the symplectic spinors $\mathbb{S}^{\infty}$ from an unitary viewpoint. We saw that $\mathbb{S}^{\infty}$ decomposes as an infinite direct sum of finite dimensional $\mathfrak{u}(n)$-modules  ${\widetilde{S}}^{\infty}_{(k)}$ which are in fact eigenspaces of the Hermite operator.
We denote the branched spinor space (which is in fact a direct sum of $\mathfrak{u}(n)$-irreps) by $\widetilde{\mathbb{S}^{\infty}}$. However, the space of $k$-homogeneous polynomials is \textit{not} irreducible as a $\mathfrak{u}(n)$-module and we denote the branched module by $\widetilde{(k)}$. This means that we are left with the following Cartan product $
\mathcal{M}_k^{hs}
\leftrightarrow 	\widetilde{(k)}\boxtimes \widetilde{\mathbb{S}^{\infty}}
$ as a representation theoretical definition of the $h$-symplectic monogenics. Recall that this are the symplectic spinor-valued polynomial functions $f\in\mathcal{P}(\mathbb{R}^{2n},\mathbb{C})\otimes \mathcal{S}(\mathbb{R}^n)$ that satisfy the system of unitary unitary-invariant partial differential equations
$$\begin{cases}
D_sf&=0\\
\widetilde{D}_sf&=0
\end{cases}$$ The explicit calculation of the Cartan product (and more generally the tensor product) will be done in \cite{sHow}. Moreover, as an application we will prove a Fischer decomposition for the Howe dual pair we obtained in this paper.
\section{Conclusion}
In this paper we investigated a new Howe dual pair occurring in symplectic Clifford analysis by allowing a compatible complex structure. This Howe duality is of the form $(G,\mathfrak{g}')$ where $G$ is the underlying  invariance group for which the relevant Dirac operators are invariant and $\mathfrak{g}'$ is the algebra generated by the Dirac operators and their duals. Depending on the orthogonal or symplectic framework, we have the following `types' of Clifford analysis and \textit{refinements} thereof:
\begin{enumerate}
\item Orthogonal geometry (giving rise to a Clifford algebra) \begin{enumerate}
	\item Clifford analysis:  $\mathsf{SO}(m)\times \mathfrak{osp}(1|2)$
	\item Hermitian Clifford analysis  $\mathsf{U}(m)\times \mathfrak{osp}(2|2)$
	\item Quaternionic Clifford analysis  $\mathsf{USp}(m)\times \mathfrak{osp}(4|2)$
\end{enumerate}
\item Symplectic geometry (giving rise to a Weyl algebra)
\begin{enumerate}
	\item Symplectic Clifford analysis:  $\mathsf{Sp}(2m)\times \mathfrak{sl}(2)$
	\item Hermitian symplectic Clifford analysis:  $\mathsf{U}(m)\times \mathfrak{su}(1,2)$
	\item Quaternionic symplectic Clifford analysis: $\mathsf{USp}(m)\times $?
\end{enumerate}
\end{enumerate}
Thus far, we extended the framework of hermitian Clifford analysis in the presence of a symplectic structure in the case of the (flat) K\"ahler manifold $\mathbb{R}^{2n}$. It is an interesting question to further reduce the symmetry to the compact symplectic group $\mathsf{USp(n)}$ so that we have the chain $\mathsf{Sp}(2n)\supset \mathsf{U}(n)\supset \mathsf{USp}(n)$. In our furture work \cite{sHow}, we will describe the Fischer decomposition accompanying this new Howe dual pair. 

\subsection*{Funding information}
The author is supported by the FWO-EoS project ‘Symplectic Techniques in Differential Geometry’ G0H4518N.  



\bibliographystyle{SciPost_bibstyle}

\end{document}